\newtheorem{theorem}{Theorem}[section]
\newtheorem{lemma}[theorem]{Lemma}
\newtheorem{corollary}[theorem]{Corollary}
\newtheorem{exampleit}[theorem]{Example}
\newenvironment{examplerm}{\begin{exampleit}\rm}{\end{exampleit}}
\newcommand{\CC}{\mathbb C}
\newcommand{\RR}{\mathbb R}
\newcommand{\ZZ}{\mathbb Z}
\newcommand{\pt}{\mathord{*}}
\newcommand{\wwedge}{\vee}
\title{Bundles over Connected Sums}
\author{Lisa C. Jeffrey}
\address{Department of Mathematics, University of Toronto,
Canada}
\email{jeffrey@math.toronto.edu}
\author{Paul Selick}
\address{Department of Mathematics, University of Toronto,
Canada}
\email{selick@math.toronto.edu}
\begin{document}

\begin{abstract}
A principal bundle over the connected sum of two manifolds need not be 
diffeomorphic or even homotopy equivalent to a non-trivial connected sum
of manifolds.
We show however that the homology of the total space of a bundle formed as
a pullback of a bundle over one of the summands is the same as if it had
that bundle as a summand.
See Theorem~\ref{decomposition}.
An application appears in~\cite{HJSX}.

Examples are given, including one where the total space of the
pullback is not homotopy equivalent to a connected sum with that as a summand
and some in which it is.

Finally, we describe the homology of the total space of a principal
$U(1)$~bundle over a 6-manifold of the type described by Wall's theorem.
It is a connected sum of an even number of copies of $S^3 \times S^4$ with 
a $7$-manifold whose homology is $\ZZ/k$ in degree $4$ (and $\ZZ$ in 
degrees $0$ and $7$, and zero in all other degrees).

\end{abstract}

\maketitle

\section{Introduction}

Let $A$ be a connected sum $A\cong B\#C$ of $n$-manifolds.
See for example Hatcher \cite{Hatcher} for the definition of connected sum.
Let $F\to L\to C$ be a bundle over~$C$ where $F$ is a manifold.

Using the definition we get a map $A\to C$.
Let $F\to M\to A$ be the pullback of the bundle $F\to L\to C$ to~$A$.

Letting $B'$ denote the complement of a chart in~$B$ and setting
$X':=(B'\times F)/(\pt\times F)$
we prove the following.
There is a cofibration $X'\to M\to L$ for which the corresponding long
exact homology/cohomology sequences split to give
$H_*(M)\cong H_*(X')\oplus H_*(L)$ and
$H^*(M)\cong H^*(X')\oplus H^*(L)$.
(See Theorems \ref{cofibration} and~\ref{decomposition}.)

These results suggest the possibility that $M$ is the connected sum of~$L$
and some manifold~$X$ whose $(n-1)$ skeleton is homotopy equivalent to~$X'$
but we give an example to show that this is not necessarily the case.
(See Example~\ref{counterex}).
As we shall see, if $M\simeq X\#L$ then the cofibration sequence
$X'\to M\to L$ would have to split to give $M\simeq X'\wwedge L$, but this
fails in Example~\ref{counterex}.

In the final section, we consider bundles over some $6$-manifolds
including the case where $A$ is a symplectic manifold and the $M$ is the
total space of its associated prequantum line bundle.
We find that in that case $M\simeq \#^{2r}(S^3\times S^4)\#L$
where $L$ is a $7$-manifold whose nonzero cohomology groups are $\ZZ$ in
degrees $0, 7$ and $\ZZ/k$ in degree $4$, where $r$ and~$k$ are determined
by the cohomology of~$A$.
(See Theorem~\ref{decompm}.)

For topological spaces $X$ and~$Y$, let $X\cong Y$ denote ``$X$ is homeomorphic
to~$Y$'' and let $X\simeq Y$ denote ``$X$ is homotopy equivalent to~$Y$''.

\section{Connected Sums}

Let $D^n$ denote the closed disk $D^n:=\{x\in \RR^n\mid\|x\|=1\}$.

\begin{lemma}
For any points $a$, $b$ in the interior of~$D^n$ there exists a
self-diffeomorphism $f:D^n\to D^n$ such that $f(a)=b$ and
$f|_{\partial D^n}$ is the identity. 
\end{lemma}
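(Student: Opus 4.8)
The plan is to realize the required diffeomorphism as the time-one map of the flow of a vector field on $D^n$ that is supported away from the boundary. First I would connect $a$ to $b$ by a path lying in the interior. Since the interior of $D^n$ is the open unit ball, it is convex, so the straight segment $t\mapsto a+t(b-a)$, $t\in[0,1]$, is a smooth path contained entirely in the interior, and its image $K=\{a+t(b-a):t\in[0,1]\}$ is a compact subset of the open ball.

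Next I would choose a cut-off. Let $\phi\colon D^n\to[0,1]$ be a smooth function with $\phi\equiv 1$ on an open neighborhood $U$ of $K$ whose closure is contained in the interior, and with $\operatorname{supp}\phi$ a compact subset of the interior; such a $\phi$ exists by a standard bump-function construction, since $K$ is compact and contained in the open ball. I would then define the autonomous vector field $V(x)=\phi(x)\,(b-a)$ on $D^n$. By construction $V$ vanishes on a neighborhood of $\partial D^n$, and $V(x)=b-a$ for every $x\in U$.

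Then I would pass to the flow. Because $V$ is smooth and compactly supported in the interior, its flow $f_t$ is complete and each $f_t$ is a self-diffeomorphism of $D^n$; moreover, since $V\equiv 0$ near the boundary, $f_t$ restricts to the identity on $\partial D^n$ for every $t$. The integral curve $c$ of $V$ with $c(0)=a$ satisfies $c'(t)=b-a$ as long as $c(t)$ stays in $U$, so by uniqueness of solutions $c(t)=a+t(b-a)$, which does remain in $K\subseteq U$ for all $t\in[0,1]$; in particular $c(1)=b$. Hence $f:=f_1$ is a self-diffeomorphism of $D^n$ with $f(a)=b$ and $f|_{\partial D^n}=\mathrm{id}$, as required.

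I expect the only points needing genuine care to be completeness of the flow and the fact that it fixes the boundary, both of which follow from $V$ being compactly supported in the interior: no integral curve can escape $D^n$ in finite time, and every curve meeting the region where $V=0$ is constant there. The choice of a straight-line path, made possible by the convexity of the ball, keeps $V$ autonomous and makes the integral curve through $a$ completely explicit, which sidesteps any appeal to the time-dependent flow and renders the verification $c(1)=b$ immediate.
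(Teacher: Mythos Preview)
Your argument is correct, and it takes a genuinely different route from the paper's. The paper writes down an explicit map: for $x\neq a$ it lets $X_x$ be the point where the ray from $a$ through $x$ meets $\partial D^n$, expresses $x=ta+(1-t)X_x$, and defines $f(x)=tb+(1-t)X_x$. This radial reparametrisation is elementary and visibly restricts to the identity on $\partial D^n$, but one must still verify smoothness at $x=a$; in fact, as literally written the map is only piecewise smooth there (already for $n=1$ the one-sided derivatives at $a$ disagree), so some additional smoothing is implicitly required. Your flow-based construction trades explicitness for robustness: smoothness of $f=f_1$ is automatic from smooth dependence of ODE solutions on initial data, the boundary is fixed because $V$ vanishes near it, and the argument transplants verbatim to any connected manifold once the straight segment is replaced by a chain of short segments inside overlapping charts---which is exactly the situation the paper turns to next.
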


\begin{proof}
Set $f(a)=b$.
For $x\ne a$, let $X_x$ be the point at which the production of the
line segment joining $a$ to~$x$ meets $\partial D^n$.
Then $x=ta+(1-t)X_x$ for some~$t$.
Set $f(x)=tb+(1-t)X_x$.
\end{proof}

More generally, we have
\begin{lemma}
Let $U_p$, $V_q$ be subcharts of~$D^n$.
Then there exists a self-diffeomorphism $f:D^n\to D^n$ such that
the restriction of $f$ to $U_p$ is the standard diffeomorphism on
open balls and such that $f|_{\partial D^n}$ is the identity.
\end{lemma}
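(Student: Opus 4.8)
The plan is to interpret the two subcharts as round open balls $U_p$ and $V_q$ lying in the interior of $D^n$, centered at $p$ and~$q$ with radii $r$ and~$s$, so that the standard diffeomorphism of open balls is the affine map $\phi(x)=q+(s/r)(x-p)$ carrying $U_p$ onto $V_q$. The goal is then to produce a self-diffeomorphism $f$ of $D^n$, equal to the identity near $\partial D^n$, whose restriction to $U_p$ is exactly~$\phi$. Rather than appeal to the isotopy extension theorem, which would only give $f|_{U_p}$ equal to \emph{some} diffeomorphism onto~$V_q$, I would build $f$ by hand as a composite of three boundary-fixing self-diffeomorphisms, in the radial spirit of the previous lemma: a concentric contraction of $U_p$ to a tiny ball about~$p$, a translation of that tiny ball along the segment from $p$ to~$q$, and a concentric expansion at~$q$ out to~$V_q$.

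Each factor would be produced as the time-one map of a compactly supported flow. For the translation, fix a small $\varepsilon>0$ and a smooth bump function $\beta\colon D^n\to[0,1]$ that is identically~$1$ on a neighborhood of the segment joining $p$ to~$q$ together with the $\varepsilon$-balls it sweeps out, and identically~$0$ near $\partial D^n$; the time-one flow of the vector field $x\mapsto\beta(x)(q-p)$ is the identity near the boundary and is honest translation by $q-p$ wherever $\beta\equiv1$. For the two scalings I would flow a radial field $x\mapsto\lambda(\|x-c\|)(x-c)$ about the relevant center $c\in\{p,q\}$, with the profile $\lambda$ constant near~$0$ and vanishing near $\partial D^n$, so that the time-one map is a linear dilation by the prescribed ratio on a neighborhood of the small ball and the identity near the boundary. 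In each case the only thing to verify is that the flow is complete and fixes the boundary, which is immediate from compact support.

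Composing contraction, translation, and expansion and tracking a point $x\in U_p$ through the three affine pieces gives
\[
x\ \longmapsto\ p+\tfrac{\varepsilon}{r}(x-p)\ \longmapsto\ q+\tfrac{\varepsilon}{r}(x-p)\ \longmapsto\ q+\tfrac{s}{\varepsilon}\cdot\tfrac{\varepsilon}{r}(x-p)=q+\tfrac{s}{r}(x-p)=\phi(x),
\]
so the intermediate radius $\varepsilon$ cancels and the restriction to $U_p$ is exactly the standard diffeomorphism, independent of the choice of~$\varepsilon$. This cancellation is precisely what makes the three-step detour harmless, and it is the reason for contracting first rather than attempting a direct translation-and-rescaling.

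The step I expect to be the main obstacle is arranging that every intermediate ball remains inside the interior of $D^n$, so that bump functions with the required support actually exist and each flow genuinely fixes the boundary. This is where the freedom to shrink first pays off: since $D^n$ is convex, the segment from $p$ to~$q$ lies in the interior, and by taking $\varepsilon$ small the thin tube swept out by the $\varepsilon$-ball during the translation, together with the contracting and expanding balls, can be confined to a compact subset of the interior bounded away from $\partial D^n$. Granting this, the three factors are well defined and boundary-fixing, and their composite is the desired~$f$.
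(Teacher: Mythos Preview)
The paper states this lemma without proof, introducing it only with the phrase ``More generally, we have'' immediately after the elementary point-moving lemma. There is therefore no argument in the paper to compare yours against.

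Your approach is sound and is the natural fleshing-out of what the paper leaves implicit. Interpreting the subcharts as round balls (the only reading under which ``the standard diffeomorphism on open balls'' makes sense) and realizing the affine map $x\mapsto q+(s/r)(x-p)$ as a composite of three compactly supported flows---contract at~$p$, translate along the segment, expand at~$q$---is exactly right, and the cancellation of the auxiliary radius~$\varepsilon$ is the key bookkeeping. One small point of precision: in the expansion step the time-one map will be the honest dilation by $s/\varepsilon$ on the whole $\varepsilon$-ball about~$q$ only if the radial profile $\lambda$ is held constant out to radius~$s$, not merely ``near~$0$'', since points flow outward during the expansion and must remain in the region where $\lambda$ is constant throughout $t\in[0,1]$. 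This is harmless provided $\overline{V_q}$ lies in the interior of~$D^n$, which is implicit in the notion of subchart; with that adjustment your argument goes through.
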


For a point~$p$ in an $n$-manifold~$X$, define a subchart around~$p$
to be an open neighbourhood $U_p$ of~$p$ which is diffeomorphic to an
open ball in $\RR^n$ within some chart of~$X$.

For a connected $n$-manifold $X$, let $X'=X\setminus D^n$ denote the complement
of a subchart of~$X$.

\begin{lemma}
Up to diffeomorphism, $X'$ is independent of the choice of the subchart
removed.
\end{lemma}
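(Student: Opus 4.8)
The plan is to show that any two subcharts of a connected manifold can be carried one onto the other by an ambient diffeomorphism of $X$ that is isotopic to the identity; the complement $X'$ is then independent of the choice up to diffeomorphism. So I would reduce the statement to the following claim: for any two subcharts $U_p$ and $U_q$ of the connected manifold $X$, there is a self-diffeomorphism $g\colon X\to X$ with $g(U_p)=U_q$ realizing the standard identification on the open balls. Granting such a $g$, it restricts to a diffeomorphism $X\setminus U_p\xrightarrow{\sim} X\setminus U_q$, which is exactly the assertion that $X'$ does not depend on the chart removed.

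To produce $g$, I would first handle the case in which $p$ and $q$ lie in a \emph{common} chart $\varphi\colon W\to \RR^n$ with $W$ diffeomorphic to an open ball. Inside such a chart the two subcharts look like two open balls in $\RR^n$, and the previous lemma (the second Lemma above, on subcharts $U_p$, $V_q$ of $D^n$) supplies a self-diffeomorphism of a disk carrying one to the other as the standard diffeomorphism and fixing the boundary pointwise. Because that map is the identity near $\partial D^n$, I can extend it by the identity over all of $X$, obtaining a global diffeomorphism $g$ that is the standard identification on the balls and is the identity outside the chart. This settles the local case.

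For the general case I would invoke connectedness to pass from the local case to the global one. First I would observe that $X\setminus$ (a finite set) remains connected for $n\ge 2$, so the points $p$ and $q$ can be joined by a smooth embedded path $\gamma\colon[0,1]\to X$; covering the compact image $\gamma([0,1])$ by finitely many ball-charts and using a Lebesgue-number argument, I obtain a chain of points $p=p_0,p_1,\dots,p_m=q$ in which each consecutive pair $p_{i-1},p_i$ lies in a common ball-chart. Applying the local case to each link produces diffeomorphisms $g_i$ with $g_i(U_{p_{i-1}})=U_{p_i}$, and composing $g=g_m\circ\cdots\circ g_1$ gives the desired diffeomorphism carrying $U_p$ to $U_q$. (One must keep the subcharts small enough at each stage so that each intermediate ball sits inside the next chart; shrinking the radii as needed is harmless since the standard diffeomorphism of balls of differing radii is available.)

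The main obstacle I anticipate is not the existence of the path or the chaining but the bookkeeping needed to guarantee that at each step the relevant subchart is genuinely contained in a single ball-chart and that the successive restrictions compose to the \emph{standard} identification on the balls rather than merely some diffeomorphism. Controlling this amounts to a careful but routine ``isotopy extension'' argument; indeed the cleanest formulation is to cite the \emph{disk theorem} (ambient isotopy uniqueness of embedded disks in a connected manifold, Palais--Cerf), which states precisely that any two orientation-compatible embeddings of a disk are ambient isotopic. If orientability is not assumed one must also move through a chart where the orientation can be reversed, which connectedness again permits. I expect the write-up to take exactly this route: reduce to the disk theorem, or equivalently assemble the global diffeomorphism by the chaining argument above.
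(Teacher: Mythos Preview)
Your proposal is correct and follows the same strategy as the paper's two-sentence proof: settle the case where both subcharts lie in a common chart by invoking the preceding lemma on $D^n$ (extending by the identity outside the chart), then handle the general case by chaining through finitely many charts via connectivity and compactness. You have simply supplied the details the paper leaves implicit; the aside on the Palais--Cerf disk theorem is reasonable context but is not something the paper invokes.
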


\begin{proof}
Let $U_p$, $U_q$ be subcharts of~$X$.
In the special case where there exists a chart~$W$ containing
both~$\bar{U_p}$ and $\bar{U_q}$ this follows from the earlier lemma.
Then for arbitrary $U_p$, $U_q$, find a finite (by compactness) chain of
charts connecting $U_p$ to~$V_q$, using connectivity.

\end{proof}

After removal of the subchart there is a deformation
retraction $X'\simeq X^{(n-1)}$ to the $(n-1)$-skeleton of~$X$.
Let $f_X:S^{n-1}\to X'$ denote the attaching map of the top cell of~$X$.

Suppose that $X$, $Y$ are simply connected oriented $n$-manifolds.

In a connected sum, $X\#Y=X'\cup_{S^{n-1}\times I}Y'$
(where the orientation on one of the inclusions $S^{n-1}\times\{0\}\rInto X'$
or $S^{n-1}\times\{1\}$ is reversed so that $X\#Y$ inherits an orientation),
there is a canonical projection $X\#Y\to X$.
Similarly we have $X\#Y\to Y$.
The canonical projections $X\#Y\to X$ and $X\#Y\to Y$
preserve the orientation class.
That is, they induce isomorphisms on~$H_n(~)$. 

Collapsing the centre of the tube $S^{n-1}\times I$ within $X\#Y$ gives
a map $X\#Y\to X'\wwedge Y'$.
If we form $(X\#Y)'$ by choosing the subchart to be removed to be
within the centre of the tube then collapsing to produce~$X'\wwedge Y'$
has collapsed a contractible subset of~$(X\# Y)'$ giving a homotopy
equivalence $(X\#Y)'\rTo^{\simeq} X'\wwedge Y'$.

By writing $S^n=S^n\#S^n$ and considering naturality of the pinch we see that
the homotopy class of the attaching map of the top cell in $X\#Y$ is given
by $f_{X\#Y}=f_X+f_Y$ within
$\pi_n(X)\oplus\pi_n(Y)\subset \pi_n(X'\wwedge Y')$.

Choosing the subchart to be removed from~$X\#Y$ to be within~$Y'$
gives a (non-canonical) inclusion $X'\rInto (X\#Y)'$
with $(X\#Y)'/X'\cong Y'$.
The composite $X'\rInto (X\#Y)'\to X$ with the canonical projection
is an injective map from a compact Hausdorff space, so it is a homeomorphism
to its image.
Composing with the inverse of this homeomorphism is a left splitting of
the inclusion $X'\rInto (X\#Y)'$.
Similarly there is a left splitting of the inclusion $Y'\rInto (X\#Y)'$.

\begin{lemma}
Let $M$ be a closed $n$-manifold and let $A\subset M'$ be a closed $n$-dim
subset of $M$ with
$\partial \bar{A} \cong S^{n-1}$.
Then $M\cong N\#X$ for some manifolds $N$ and~$X$ with $N'=A$.
Furthermore the canonical projection
$M'\to N'=A$ is a left splitting of the inclusion $A\rInto M'$.
\end{lemma}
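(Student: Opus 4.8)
The plan is to realise the separating sphere $S:=\partial\bar A\cong S^{n-1}$ as the tube of a connected sum. Write $W:=\overline{M\setminus A}$ for the closure of the complement, so that $A$ and $W$ are compact codimension-$0$ submanifolds-with-boundary meeting exactly along $S$, with $M=A\cup_S W$. Since $A\subset M'$, the subchart deleted in forming $M'$ is disjoint from $A$ and hence lies in $W$, so $W$ is nonempty. As $S$ bounds the codimension-$0$ piece $A$ it is two-sided, so it admits a bicollar $S\times[-1,1]\hookrightarrow M$ with $S\times[-1,0]\subset A$ and $S\times[0,1]\subset W$; this collar is what lets me match the sphere-gluing $A\cup_S W$ against the collar-gluing built into the definition of connected sum.

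First I would cap off each piece: set $N:=A\cup_S D^n$ and $X:=W\cup_S D^n$, attaching a disk along the boundary sphere in each case, so that $N$ and $X$ are closed $n$-manifolds. Taking the subchart removed from $N$ to be the interior of the capping disk gives $N'=A$ on the nose, and by the earlier lemma any other choice yields a homeomorphic result; symmetrically $X'\cong W$ (indeed $X'=W$ for the analogous choice).

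Next I would identify the connected sum. By definition $N\#X=N'\cup_{S^{n-1}\times I}X'$, which with the above identifications is $A\cup_{S\times I}W$; absorbing the gluing tube $S\times I$ into the bicollar of $S$ in $M$ shows this is homeomorphic to $A\cup_S W=M$, with orientations matched by reversing one of the two inclusions as in the definition of connected sum. This yields $M\cong N\#X$ with $N'=A$.

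Finally, the splitting follows from the paragraph preceding this lemma, applied with $X\#Y=N\#X$. Choosing the subchart deleted from $M=N\#X$ to lie in the $X$-summand realises the inclusion $A=N'\hookrightarrow M'$, and, exactly as argued there, the canonical projection $M'\to N'=A$ restricts to the identity on $N'$ and is therefore a left splitting of this inclusion. The one genuinely delicate point is the homeomorphism in the third step—that the sphere decomposition $M=A\cup_S W$ agrees up to homeomorphism with the tube decomposition intrinsic to the connected sum. This is precisely where the bicollar of $S$ is used, and I expect it to be the main obstacle, resolved by a standard collaring argument.
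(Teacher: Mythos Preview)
Your proposal is correct and follows essentially the same route as the paper: cap each piece with a disk to form $N$ and $X$, and use a collar of the separating sphere to match $M$ with the tube decomposition defining $N\#X$. The paper's only simplification is to take the collar $T\cong S^{n-1}\times I$ entirely inside the complement $\hat X:=M\setminus A$ and set $X':=\hat X\setminus T$, so that $M=\bar A\cup_{S^{n-1}\times\{0\}}T\cup_{S^{n-1}\times\{1\}}\overline{X'}$ is already literally the connected-sum decomposition, avoiding the ``absorption'' step you flag.
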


\begin{proof}
Set $\hat{X}:=M\setminus A$.
Then $\hat{X}$ is a manifold-with-boundary with
$\partial\hat{X}=\partial \bar{A}$.
Let $T\cong S^{n-1}\times I$ be a tubular neighbourhood of
$\partial \hat{X}$ in $\hat{X}$ and set $X':=\hat{X}\setminus T$.
Then
$$M=\bar{A}\cup_{S^{n-1}\times \{0\}} T
\cup_{S^{n-1}\times \{1\}}\overline{X'}$$
so $M=N\# X$ where $N=A\cup_{(T\times \{0\})}D^n$ and
$X=X'\cup_{(T\times \{1\})}D^n$.
By construction $A\rInto M'\to N'=A$ is the identity on~$A$.
\end{proof}

\section{The cofibration sequence associated to a bundle over a connected sum}

For definitions and properties of principal cofibrations used in this
section see pp. 56--61 of~\cite{selickbook}.

Let $B$, $C$ be closed $n$-manifolds and let $A:=B\#C$.
Suppose $$F \to L \to C$$ is a (locally trivial) fibre bundle
whose fibre~$F$ is a manifold.
Then $L$ is a manifold of dimension $n + \dim (F)$, which we will denote
by~$m$.

Let $F\to M\to A$ be the pullback of the bundle under the canonical
projection $A\to C$.
The total space $M$ is a manifold of dimension $m$.

Let $\hat{L}$ be the total space of the restriction of
the bundle to $C':=C\setminus{\rm chart}$.

By definition,
$$ A = B' \cup_{S^{n-1} \times I} C'$$
where by construction, the restriction of the bundle to $B'$ is
trivial.

Taking inverse images under the bundle projection $M\rOnto A$
gives
$$ M = (B' \times F) \cup_{(S^{n-1} \times I  \times F)} \hat{L}. $$

In other words, we have
\begin{diagram}
S^{n-1} \times I \times F & \rInto & (B' \times F)& \rTo & (B' \times F)/(S^{n-1} \times I \times F) \cr
\dTo           &&                 \dTo            && \dEqualto \cr
\hat{L} & \rInto & M &\rTo & M/\hat{L}\cr
\end{diagram}
where the left square is a pushout.

The space
$$
\begin{array}{ll}
M/\hat{L}&=(B'\times F)/(S^{n-1}\times I\times F)\cr
&=\bigl(B'/(S^{n-1}\times I) \times F\bigr)/(* \times F)\cr
&=(B\times F)/(*\times F).\cr
\end{array}$$
has the same homology as~$ B \wwedge (B \wedge  F). $
In fact, if $F$ is a suspension then 
$(B \times F)/(* \times F) \simeq
B \wwedge (B \wedge  F). $
(Selick,~\cite{selickbook}~Prop~7.7.8)

Set $X':=(B'\times F)/(*\times F)$.

\begin{theorem}\label{cofibration}
There is a cofibration diagram
\begin{diagram}
X'&\rTo& M'&\rTo& L'\cr
\dEqualto&&\dTo&&\dTo\cr
X'&\rTo& M&\rTo& L\cr
\end{diagram}
(i.e. the rows are cofibrations and the right square is a pushout.)
\end{theorem}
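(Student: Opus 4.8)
Since $M$ is the pullback of $L$ along the canonical projection $A\to C$, the universal property gives a bundle map $q\colon M\to L$ lying over $A\to C$; it is the identity on $\hat L$ and, over $B'$ (where the bundle is trivial), it is $\pi\times\mathrm{id}_F\colon B'\times F\to D^n\times F$ with $\pi\colon B'\to D^n$ the restriction of $A\to C$. Exactly as $M=(B'\times F)\cup_{S^{n-1}\times I\times F}\hat L$, the total space over $C$ is $L=(D^n\times F)\cup_{S^{n-1}\times F}\hat L$, so the plan is to read the whole theorem off from $q$: first show $\mathrm{cofib}(q)\simeq\Sigma X'$, and then identify the two rows with the Barratt--Puppe sequences of $q$ and of its restriction to the punctured manifolds.

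For the cofibre computation I would use that $D^n$ is contractible: then $\pi\times\mathrm{id}_F$ is homotopic to the projection $B'\times F\to F$, and under $D^n\times F\simeq F$ the gluing locus $S^{n-1}\times F\hookrightarrow D^n\times F$ becomes this same projection. Hence, up to homotopy, $L$ is the pushout of $M\hookleftarrow B'\times F\xrightarrow{\mathrm{proj}}F$, and $q$ is the cobase change of $\mathrm{proj}$ along the cofibration $B'\times F\hookrightarrow M$. Cobase change preserves cofibres, so $\mathrm{cofib}(q)\simeq\mathrm{cofib}(\mathrm{proj}\colon B'\times F\to F)$. Now $\mathrm{proj}$ is split by the section $s\colon f\mapsto(\pt,f)$, whose cofibre is precisely $X'=(B'\times F)/(\pt\times F)$; the long exact sequence of $\mathrm{proj}$, in which $\mathrm{proj}_*$ is split surjective through $s_*$, gives $\tilde H_*(\mathrm{cofib}(\mathrm{proj}))\cong\tilde H_{*-1}(X')$, and in fact $\mathrm{cofib}(\mathrm{proj})\simeq\Sigma X'$. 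Thus the Puppe sequence of $q$ reads $M\xrightarrow{q}L\to\Sigma X'\to\Sigma M\to\cdots$, which is the rotation of the asserted cofibration $X'\to M\to L$.

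For the upper row and the right-hand pushout I would excise the top chart $D^m$ from the interior of $\hat L$, where $q$ is a diffeomorphism. Writing $M'=M\setminus D^m$, $L'=L\setminus D^m=q(M')$ and $\hat L'=\hat L\setminus D^m$, the quotients $M'/\hat L'$ and $L'/\hat L'$ coincide with $M/\hat L$ and $L/\hat L$, so the identical argument yields the cofibration $X'\to M'\to L'$ with the \emph{same} $X'$; this is the left-hand identity. Since the chart and its boundary sphere lie where $q$ is a diffeomorphism, the attaching maps obey $f_L=q\circ f_M$ and $q$ carries the top cell of $M$ homeomorphically to that of $L$, whence $L=L'\cup_{M'}M$ and the right square is a pushout.

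The step I expect to be the main obstacle is upgrading $\mathrm{cofib}(q)\simeq\Sigma X'$ to a genuine cofibration \emph{starting} at $X'$ — equivalently, producing the first map $X'\to M$ and checking that $q$ is its cofibre map. The naive inclusion $B'\times F\hookrightarrow M$ does not factor through $X'$, so the initial map is not geometrically evident; one must desuspend the Puppe connecting map $\Sigma X'\to\Sigma M$, and this is exactly where the properties of principal cofibrations in \cite{selickbook} and the left splittings furnished by the connected-sum structure of Section~2 are needed. The same point is what forces one to identify the induced map on cofibres $M/\hat L\to L/\hat L$ with the half-smash of the degree-one pinch $B\to S^n=B/B'$: once that pinch is recognized, half-smashing the cofibration $B'\hookrightarrow B\to S^n$ reproduces $X'\to M/\hat L\to L/\hat L$ and confirms the cofibre identification.
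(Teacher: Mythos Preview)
Your argument is correct and close in spirit to the paper's, but organized differently. The paper does not compute $\operatorname{cofib}(q)$ by realizing $L$ as the homotopy pushout $M\cup_{B'\times F}F$ and then invoking $\operatorname{cofib}(\mathrm{proj})\simeq\Sigma X'$; instead it quotients both $M$ and $L$ by $\hat L$, identifies the induced map $M/\hat L\to L/\hat L$ with the half-smash $(B\times F)/(\pt\times F)\to(S^n\times F)/(\pt\times F)$ of the pinch $B\to S^n$, and then reads off the right-hand column $X'\to M/\hat L\to L/\hat L$ directly as the half-smash of the cofibration $B'\hookrightarrow B\to S^n$. A $3\times 3$ diagram with $\hat L=\hat L$ in the left column and this cofibration in the right column then ``yields'' $X'\to M\to L$. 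Your last paragraph in fact anticipates exactly this alternative route, so the two arguments are really the same computation seen from opposite ends: you push out along $B'\times F\hookrightarrow M$, the paper quotients by $\hat L$; both identify $\operatorname{cofib}(M\to L)\simeq\Sigma X'$. Your treatment of the top row and the right-hand pushout (delete a chart inside $\hat L$, where $q$ is the identity) matches the paper's one-line ``deleting a chart from $L$ and its preimage from $M$''.

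Your worry about producing an honest map $X'\to M$ and desuspending the Puppe connecting map is well taken, but note that the paper does not resolve it either: its $3\times 3$ diagram draws an arrow $X'\to M$ without constructing it, and the phrase ``yields the cofibration $X'\to M\to L$'' is doing the same work as your rotation of the Puppe sequence. For the intended application (the long exact sequences of Theorem~\ref{decomposition}) only the identification $\operatorname{cofib}(M\to L)\simeq\Sigma X'$ is needed, and both arguments supply that cleanly; so this is a shared informality rather than a defect of your approach.
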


\begin{proof}
We had
$$M/\hat{L}=(B'\times F)/(S^{n-1}\times I\times F)$$
Also, since $L=\hat{L}\cup_{S^{n-1}\times I\times F}F$ we have
$$L/\hat{L}=(S^n\times F)/(*\times F)$$
(which can be regarded as the special case of the preceding
with $B=S^n$).
Thus we have a diagram
\begin{diagram}
&&X'&\rEqualto& X'\cr
&&\dTo &&\dTo\cr
\hat{L}&\rTo& M&\rTo&M/\hat{L}=(B\times F)/(*\times F)\cr
\dEqualto&&\dTo &&\dTo\cr
\hat{L}&\rTo& L&\rTo&L/\hat{L}=(S^n\times F)/(*\times F)\cr
\end{diagram}
in which the bottom right square is a pushout, the rows and right columns
are cofibrations and which yields the cofibration
$X'\to M\to L$.
Deleting a chart from $L$ and deleting its preimage from~$M$
gives the first row of the theorem.
\end{proof}

 From the long exact homology sequence of the cofibration we get
\begin{corollary}
The lift $M\to L$ of the canonical projection preserves the
orientation class.
That is, it induces isomorphisms on $H_{m}(~)$, where $m$ was defined earlier.
\end{corollary}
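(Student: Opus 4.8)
The plan is to read the statement off the top degree of the long exact homology sequence of the cofibration $X'\to M\to L$ supplied by Theorem~\ref{cofibration}. A dimension count will kill $H_m(X')$ and give injectivity for free, and the pushout square already contained in that theorem will identify the top cells of~$M$ and~$L$ and thereby furnish surjectivity.

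First I would observe that $B'$ has the homotopy type of the $(n-1)$-skeleton of~$B$, so $X'=(B'\times F)/(\pt\times F)$ has the homotopy type of a complex of dimension $(n-1)+\dim F=m-1$; in particular $H_m(X')=0$. The relevant segment of the long exact sequence,
$$H_m(X')\to H_m(M)\to H_m(L)\to H_{m-1}(X'),$$
then shows at once that $H_m(M)\to H_m(L)$ is injective. As $M$ and~$L$ are closed oriented $m$-manifolds their top homology groups are infinite cyclic, so only surjectivity remains.

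For that I would use the right-hand square of Theorem~\ref{cofibration}, which is a pushout. Taking the chart deleted from~$L$ inside~$\hat L$, where the map $M\to L$ restricts to a homeomorphism, makes $M'$ the complement of a genuine $m$-chart of~$M$, so that $M/M'\cong S^m\cong L/L'$. Since the square is a pushout, $M\to L$ carries $M'$ into~$L'$ and induces a homeomorphism on the vertical cofibres $M/M'\cong L/L'$. The collapse maps $M\to M/M'$ and $L\to L/L'$ send the fundamental classes to generators of $H_m(M,M')\cong H_m(S^m)$ and $H_m(L,L')\cong H_m(S^m)$, hence are isomorphisms on~$H_m$. Naturality of the collapse then places $H_m(M)\to H_m(L)$ in a commuting square with three isomorphisms, forcing it to be an isomorphism as well.

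The step needing the most care is the orientations: the above already yields an isomorphism, hence degree $\pm1$, but to conclude that the orientation class is preserved I would invoke the fact, proved earlier, that the canonical projection $A\to C$ has degree one. Over the refilled chart the bundle is trivial, so on the top cell $M\to L$ is the product of that degree-one collapse with the identity of~$F$ and is therefore orientation-preserving; checking that this matches the orientations induced on $M/M'$ and $L/L'$ by the collapse maps is the only delicate bookkeeping. The remaining ingredients---the dimension estimate and the standard behaviour of the collapse on top homology---are routine.
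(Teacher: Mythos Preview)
Your argument is correct and is in the same spirit as the paper's: both extract the corollary from the cofibration diagram of Theorem~\ref{cofibration}. The paper's entire proof is the single clause ``from the long exact homology sequence of the cofibration,'' which taken at face value yields only the injectivity you obtain in your first paragraph (via $H_m(X')=0$); your appeal to the pushout square to identify $M/M'\cong L/L'\cong S^m$ and thereby force surjectivity makes explicit a step the paper leaves to the reader, and your discussion of the sign of the degree goes beyond what the paper writes. The paper does note that naturality of the Serre spectral sequence (applied to $M\to L$ over the degree-one map $A\to C$) gives an alternative proof, which you do not use.
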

This Corollary can be proved in other ways such as naturality of
the Serre spectral sequence.

Let $f:X\to Y$ be a differentiable map between compact oriented
$m$-manifolds.
Let $D_X:H^k(X)\cong H_{n-k}(X)$ and $D_Y:H^k(Y)\cong H_{n-k}(Y)$
be the Poincar\'e Duality isomorphisms.
Suppose $f$ has degree~$\lambda $ (multiplies by
$\lambda$ on~$H_n(~)$).
Then $f_*\circ D_X\circ f^*=\lambda D_Y$.
In particular, if $f$ preserves the orientation class (that is,
has degree~$1$)  then $f^*$ is injective and $f_*$ is surjective.
Applying this to $M\to L$ shows
\begin{theorem}(Decomposition Theorem)\label{decomposition}
{\ } 

In the long exact homology sequence of the cofibration,
the connecting map $\partial:H_q(L)\to H_{q-1}(X')$ is zero. 
Likewise, in the
long exact cohomology sequence,  the map
$\delta:H^{q-1}(X')\to H^q(L)$ is zero.
Thus for $0<q<m$ we have
 $H_q(M)\cong H_q(X')\oplus H_q(L)$
and  $H^q(M)\cong H^q(X')\oplus H^q(L)$.
\end{theorem}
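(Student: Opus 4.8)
The plan is to feed the Poincar\'e duality computation recorded just above the statement into the two long exact sequences of the cofibration $X'\to M\to L$ of Theorem~\ref{cofibration}. The only geometric input required is that the quotient map $p\colon M\to L$ is a degree-one map of closed oriented $m$-manifolds: $M$ and $L$ are both compact oriented of dimension $m$, and the preceding Corollary identifies $p$ with the lift of the canonical projection, which preserves the orientation class. Applying the duality formula $p_*\circ D_M\circ p^*=D_L$ (the case $\lambda=1$) then shows at one stroke that $p^*$ is a split monomorphism and $p_*$ is a split epimorphism in every degree, with explicit one-sided inverses $r:=D_L^{-1}\circ p_*\circ D_M$, a retraction of $p^*$, and $\sigma:=D_M\circ p^*\circ D_L^{-1}$, a section of $p_*$. (A quick degree count confirms these compose into self-maps of the correct groups, and $r\circ p^*=\mathrm{id}$, $p_*\circ\sigma=\mathrm{id}$ follow immediately from the displayed formula.)

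Next I would use exactness to kill the connecting maps. In the homology sequence
$$H_q(M)\xrightarrow{\,p_*\,}H_q(L)\xrightarrow{\,\partial\,}H_{q-1}(X')$$
surjectivity of $p_*$ forces $\ker\partial=\operatorname{im}p_*=H_q(L)$, so $\partial=0$. Dually, in the cohomology sequence
$$H^{q-1}(X')\xrightarrow{\,\delta\,}H^q(L)\xrightarrow{\,p^*\,}H^q(M)$$
injectivity of $p^*$ gives $\operatorname{im}\delta=\ker p^*=0$, so $\delta=0$. This establishes the two vanishing statements of the theorem.

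With both connecting maps vanishing, each long exact sequence breaks into short exact sequences $0\to H_q(X')\to H_q(M)\xrightarrow{\,p_*\,}H_q(L)\to 0$ and $0\to H^q(L)\xrightarrow{\,p^*\,}H^q(M)\to H^q(X')\to 0$ (injectivity of $i_*$ and surjectivity of $i^*$ being the other half of exactness once $\partial$ and $\delta$ die). These split: $\sigma$ splits the first and $r$ splits the second, yielding $H_q(M)\cong H_q(X')\oplus H_q(L)$ and $H^q(M)\cong H^q(X')\oplus H^q(L)$. Restricting to $0<q<m$ sidesteps the harmless endpoint cases, namely reduced $H_0$ and the top class in degree $m$, where one summand vanishes in any event.

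I do not expect a serious obstacle, since once the duality lemma is in hand the argument is purely formal homological algebra. The one point demanding care is verifying that the lemma's hypotheses are genuinely met: that $M$ and $L$ are compact and oriented of the same dimension $m$ (inherited from the base manifolds and the fibre), and that the cofibration map $M\to L$ of Theorem~\ref{cofibration} is \emph{exactly} the degree-one projection of the Corollary rather than some other collapse. All the real content is thus concentrated in the degree-one/Poincar\'e-duality observation, which the preceding lemma has already isolated.
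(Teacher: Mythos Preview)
Your proposal is correct and follows essentially the same route as the paper: the paper records the Poincar\'e-duality identity $f_*\circ D_X\circ f^*=\lambda D_Y$ for a degree-$\lambda$ map, notes that degree~$1$ forces $p^*$ injective and $p_*$ surjective, and then simply says ``Applying this to $M\to L$ shows'' the theorem. You have spelled out the remaining formal steps (killing $\partial$ and $\delta$ via exactness, and exhibiting the explicit splittings $r$ and $\sigma$) that the paper leaves implicit, but the argument is the same.
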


This suggests that perhaps there is a manifold~$X$ such that
$M\simeq X\#L$ where $X$ is
homotopy equivalent to the one-point compactification
of~$X'$,
but this is not necessarily true.

\begin{examplerm}\label{counterex}
Consider $A=\CC P^2$ and write $A=B\#C$ where $B=\CC P^2$ and $C=S^4$
Consider the trivial bundle $S^7\times S^4 \to S^4$.
Then $M=S^7\times \CC P^2$; $B'=S^2$; $C'=*$; $A'=B'\vee C'=S^2$ while
$$M'=(F\times A)'=(F\times A')\cup_{F'\times A'} (F'\times A) $$
$$=(S^7\times S^2)\cup_{*\times S^2} (*\times \CC P^2)
=\CC P^2\wwedge S^7\wwedge S^9
$$
and $L=S^4\times S^7$ so $L'=S^4\wwedge S^7$.
Our cofibration is
$$(S^2\times S^7)/(*\times S^7)\to M'\to S^4\wwedge S^7$$
which becomes
$S^2\wwedge S^9\to \CC P^2\wwedge S^7\wwedge S^9\to S^4\wwedge S^7$.
This does not split so 
in this example
$M$ does not become homotopy equivalent to~ $X\#L$
for any~$X$.
\end{examplerm}

\section{Bundles over $6$-manifolds}

Let $A$ be a $6$-manifold such that $H^*(A)$ is simply connected and
torsion-free.
Suppose $H^2(A)=\ZZ$.

Let $x\in H^2(A)$ be a generator and let $V\in H^6(A)$ be the volume form.
Then $x^3=kV$ for some integer~$k$.

By Wall~\cite{Wall}, we can write $A=B\#C$ where $B=(S^3\times S^3)^{\#r}$ for
some~$r$ and $C$ is a simply connected torsion-free $6$-manifold
with $H^3(C)=0$ and~$H^2(C)=\ZZ$.

Although $M$ is a $S^1$ bundle over $A$, it does not immediately 
follow from Wall's result that $M$ also admits a decomposition as a
connected sum. We shall see that this is in fact true. 
This is the content of our Theorem \ref{decompm} below.

Associated to $x$ there are  complex line bundles over
$A$ and $C$ classified by~$x$.
Let $M$ and $L$ denote the sphere bundles of these line bundles.
Then there are $S^1$-bundles $S^1\to M\to A$ and $S^1\to L\to C$.
Note that the long exact homotopy sequence tells us that
$\pi_1(M)=\ZZ$ and $\pi_q(M)=\pi_q(A)$ for $q\ne1$.

As in Ho-Jeffrey-Selick-Xia~\cite{HJSX} we calculate that the cohomology of
the $7$-manifold $L$ is given by
$H^q(L)=\begin{cases}
\ZZ& q=0,7;\cr
\ZZ/k& q=4.\cr 
0&otherwise.\cr
\end{cases}$

\begin{theorem}\label{decompm}
We have
$$M \simeq	\#^{2r}(S^3\times S^4)\#L,$$
where the homology of the space $L$ is specified above.
\end{theorem}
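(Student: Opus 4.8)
The plan is to identify the space $X'$ explicitly, recognise it as the complement of a disc in $N:=\#^{2r}(S^3\times S^4)$, and then upgrade the homological splitting of Theorem~\ref{decomposition} to a homotopy equivalence by pinning down the cell structure of $M$ and the attaching map of its top cell.

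First I would compute $X'=(B'\times S^1)/(\pt\times S^1)$. Since $B=\#^r(S^3\times S^3)$, iterating $(X\#Y)'\simeq X'\wwedge Y'$ together with $(S^3\times S^3)'\simeq S^3\wwedge S^3$ gives $B'\simeq\Wwedge^{2r}S^3$. The fibre $F=S^1$ is a suspension, so Proposition~7.7.8 of~\cite{selickbook} applies and yields $X'\simeq B'\wwedge(B'\wedge S^1)=B'\wwedge\Sigma B'\simeq(\Wwedge^{2r}S^3)\wwedge(\Wwedge^{2r}S^4)$. As $(S^3\times S^4)'\simeq S^3\wwedge S^4$, this is precisely $N'$. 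In parallel, the Gysin sequence of $S^1\to M\to A$, whose Euler class is the generator $x$, recovers $H^*(M)$: the degree-$3$ classes come from the $B$ summand, so $x\cup\alpha=0$ for them, while $\cup x\colon H^2(A)\to H^4(A)$ is multiplication by~$k$. It follows that the cup pairing $H^3(M)\times H^4(M)\to H^7(M)$ is unimodular on the free parts and has $\ZZ/k\subset H^4(M)$ in its radical --- the same cohomology ring as $N\#L$.

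Next I would compare the cofibration $X'\to M'\to L'$ of Theorem~\ref{cofibration} with the analogous sequence $N'\to N\#L\to L$ for a connected sum. Here $L'\simeq S^3\cup_k e^4$, so $M'$ is $X'$ with a $3$-cell and a $4$-cell attached, and the attaching map of that $4$-cell lies in $\pi_3(\Wwedge^{2r+1}S^3)\cong\ZZ^{2r+1}$, which is torsion-free. Hence the homological splitting of Theorem~\ref{decomposition} lifts --- after a self-homotopy-equivalence of the wedge of $3$-spheres --- to a genuine splitting $M'\simeq X'\wwedge L'\simeq(N\#L)'$. This is exactly the step that fails in Example~\ref{counterex}, where the obstruction is the unstable class $\eta$ in $\CC P^2$ (detected by $Sq^2$) and is invisible to homology. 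Writing $M\simeq M'\cup_{f_M}e^7$ and $N\#L\simeq(N'\wwedge L')\cup_{f_N+f_L}e^7$, I would then match $f_M$ with $f_N+f_L$ in $\pi_6(N'\wwedge L')$: its image in $\pi_6(L')$ equals $f_L$ because $M/X'=L$; its Whitehead-product components among the cells of $X'=N'$ are forced to coincide with those of $f_N$ by the unimodular cup pairing computed above; and the cross-terms between the $X'$-cells and the $L'$-cells are Whitehead products dual to cup products across the two summands, which vanish. Granting this, Whitehead's theorem gives $M\simeq N\#L$.

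The step I expect to be the main obstacle is this last one. Beyond the Whitehead-product part fixed by the ring structure, one must rule out ``exotic'' Hopf-type summands of $f_M$ lying in the torsion of $\pi_6(S^3)$ and $\pi_6(S^4)$, which are detected neither by homology nor by the cup product and which are precisely the kind of phenomenon responsible for the failure in Example~\ref{counterex}. I would dispose of these either via naturality of the pinch map, so that $f_M$ genuinely decomposes as a sum over the summands with each contributing its standard attaching map, or --- more geometrically --- by realising $M$ as the fibrewise connected sum of $L$ with the trivial bundle $B\times S^1$ and applying the connected-sum Lemma to an embedded $S^6\subset B'\times S^1$ that cuts $M$ into a copy of $L'$ and a copy of $N'$, thereby giving $M\cong N\#L$ at the manifold level.
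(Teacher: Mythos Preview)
Your strategy coincides with the paper's: identify $X'$, split the cofibration $X'\to M'\to L'$ to get $M'\simeq X'\wwedge L'$, and then match the attaching map of the top $7$-cell. The one substantive difference is in how the splitting of $M'$ is obtained. You argue homologically: the $4$-cell of $L'=P^4(k)$ attaches by a class in $\pi_3\bigl(\Wwedge^{2r+1}S^3\bigr)\cong\ZZ^{2r+1}$, and the degree-$1$ splitting of Theorem~\ref{decomposition} forces the extra components to be divisible by~$k$, so a $GL_{2r+1}(\ZZ)$ change of basis kills them. The paper instead uses the \emph{geometric} retraction $M'\to A'\to B'=\Wwedge^{2r}S^3$ coming from the bundle map and the connected-sum projection: since the cofibration is principal with coattaching map $P^3(k)\to X'$ landing (for degree reasons) in $\Wwedge^{2r}S^3$, the existence of this retraction forces that map to be null outright. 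The paper's route is shorter and avoids the gcd/change-of-basis bookkeeping; yours is more self-contained in that it uses only Theorem~\ref{decomposition}.

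On the top cell you are more scrupulous than the paper, which simply asserts $f_M=\sum_i[\iota_i^3,\iota_i^4]+f_L$ after computing the cup products. Your worry about Hopf-type torsion in $\pi_6(S^3)\cong\ZZ/12$ and $\pi_6(S^4)\cong\ZZ/2$ is exactly the right one, and the paper does not visibly address it. However, your two proposed fixes do not close the gap as stated. The ``naturality of the pinch'' argument is circular: we only have a pinch map $M\to M/X'\wwedge M/L'$ once we already know $M$ is a connected sum. The fibrewise/geometric argument fails because the piece $B'\times S^1\subset M$ has $\pi_1=\ZZ$ and boundary $S^5\times S^1$, so it is neither homotopy equivalent to $N'=\bigl(\#^{2r}(S^3\times S^4)\bigr)'$ nor bounded by an embedded~$S^6$; there is no $S^6$ cutting $M$ into $N'$ and~$L'$ in the way you describe. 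If you want to finish along the paper's lines you will need a different argument to exclude those torsion summands (for instance, exploiting self-equivalences of $M'$ that move $f_M$ within its orbit, or tracking $f_M$ through the actual bundle construction rather than just through homology).
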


\begin{proof}
In the notation of the preceding section applied to $S^1\to M\to A$
we have $B'=\wwedge_{2r}S^3$, $L'=P^4(k)$ and
$$X':=(B'\times S^1)/(\pt\times S^1)\simeq B'\wwedge (B'\wedge S^1)
\wwedge_{2r} (S^3 \wwedge \Sigma^3 S^1)
$$
where $P^n(k)$ denotes the Moore space $S^{n-1}\cup_k e^n$.
Thus our cofibration sequence becomes
$$\wwedge_{2r} (S^3 \wwedge \Sigma^3 S^1)\to M'\to P^4(k)$$
or equivalently
$$\wwedge_{2r} (S^3 \wwedge S^4)\to M'\to P^4(k).$$

The composition of the bundle map $M'\to A'$ with the canonical
projection $A'\to B'$ provides a splitting of  
 the restriction of
$$\wwedge_{2r} (S^3 \wwedge S^4)\to M'$$ to~$\wwedge_{2r} S^3$.

For degree reasons, the cofibration
$$\wwedge_{2r} (S^3 \wwedge S^4)\to M'\to P^4(k)$$
is principal, induced from some attaching map
$P^3(k)\to \wwedge_{2r} (S^3 \wwedge S^4)$
whose image (for degree reasons) lands in $\wwedge_{2r} S^3$.
Since the restriction of
$\wwedge_{2r} (S^3 \wwedge S^4)\to M'$ to~$\wwedge_{2r} S^3$
splits, this implies that this attaching map is trivial.
Thus the cofibration splits to give
$$M'\simeq \wwedge_{2r} (S^3 \wwedge S^4)\wwedge_{2r} P^4(k).$$

To obtain $M$ from $M'$ we attach the top cell giving
$$H^q(M)=H^q(M')\oplus H^q(S^7)
=\begin{cases}
\ZZ&q=0,7\cr
\ZZ^{2r}&q=3\cr
\ZZ^{2r}\oplus \ZZ/k&q=4\cr
0&{\rm otherwise}.\cr
\end{cases}
$$
Letting $\tilde{V}$ denote the generator of $H^7(M)$,
using Poincar\'e duality we can pair the generators
$\langle u_1, u_2,\ldots u_r\rangle$ of $\ZZ$ in degrees $3$
with the generators
$\langle v_1, v_2,\ldots v_r\rangle$ of $\ZZ$ in degrees $4$
so that $u_i v_j=\delta_{ij}\tilde{V}.$
If we reduce to $\ZZ/k$ coefficients, there is also a nonzero
cup product $ab$ where $a$, $b$ are generators of $H^3(M;\ZZ/k)$
and $H^4(M;\ZZ/k)$ respectively.

Examining the cohomology of~$M$, we see that
$$H^*(M)=H^*\bigl(\#^{2r}(S^3\times S^4)\#L\bigr)$$
where
$H^q(L)
=\begin{cases}
\ZZ&q=0,7\cr
\ZZ/k&q=4\cr
0&{\rm otherwise}.\cr
\end{cases}$
\end{proof}
The attaching maps $f_M$ and $f_{\#^{2r}(S^3\times S^4)\#L}$
are both
$$[\iota^3_1,\iota^4_1]+[\iota^3_2,\iota^4_2]
+\ldots+[\iota^3_r,\iota^4_r]+f_L$$
where the Whitehead product $[\iota^3,\iota^4]$ is the attaching
map~
$$f_{S^3\times S^4},$$ and so
$$M\simeq \#^{2r}(S^3\times S^4)\#L. $$
\hfill $\square$


\begin{thebibliography}{99}
\bibitem{Hatcher} A. Hatcher, {\em Algebraic Topology}. Cambridge University Press, 2002.
\bibitem{HJSX} N. Ho, L. Jeffrey, P. Selick, E. Xia, Flat 
connections and the commutator map for $SU(2)$. {\em Oxford Quart. J. Math.,}
to appear (in volume in memory of Michael Atiyah)
\bibitem{selickbook} P. Selick, {\em Introduction to Homotopy Theory.}
AMS (Fields 
Institute Monographs vol. 9), 1997.
\bibitem{Wall} C.T.C. Wall, Classification problems in differential
    topology. V.
On certain 6-manifolds. {\em Invent. Math.} {\bf 1} (1966), 355--374;
corrigendum, ibid., {\bf 2} (1966), 306.
\end{thebibliography}
\end{document}